\documentclass[12pt,twoside]{amsart}
\usepackage{amsmath}
\usepackage{amsthm}
\usepackage{amsfonts}
\usepackage{amssymb}
\usepackage{latexsym}
\usepackage{mathrsfs}
\usepackage{amsmath}
\usepackage{amsthm}
\usepackage{amsfonts}
\usepackage{amssymb}
\usepackage{latexsym}
\usepackage{geometry}
\usepackage{dsfont}
\usepackage[dvips]{graphicx}
\usepackage[colorlinks=true,linkcolor=red,citecolor=blue]{hyperref}
\usepackage{color}
\usepackage[all]{xy}

\date{}
\pagestyle{plain}
\textheight= 22 true cm \textwidth =15 true cm
\allowdisplaybreaks[4] \footskip=15pt
\renewcommand{\uppercasenonmath}[1]{}

\topmargin=27pt \evensidemargin0pt \oddsidemargin0pt
\linespread{1.3}
\numberwithin{equation}{section} \theoremstyle{plain}
\usepackage{graphicx,amssymb}
\usepackage[all]{xy}
\usepackage{amsmath}

\allowdisplaybreaks
\usepackage{amsthm}
\usepackage{color}

\theoremstyle{plain}
\newtheorem{theorem}{Theorem}[section]

\newtheorem{corollary}[theorem]{Corollary}
\newtheorem{example}[theorem]{Example}
\newtheorem*{open question}{Open Question}
\newtheorem{definition}[theorem]{Definition}

\theoremstyle{definition}

\theoremstyle{remark}
\newtheorem{remark}[theorem]{Remark}

\newcommand{\N}{\mathcal{N}}

\def\p{\frak p}

\def\Ker{{\rm Ker}}

\def\Im{{\rm Im}}

\def\Ann{{\rm Ann}}


\begin{document}
\begin{center}
{\large  \bf  A note on the Cohen type theorem and the Eakin-Nagata type theorem for uniformly $S$-Noetherian rings}

\vspace{0.5cm}
Xiaolei Zhang\\
\bigskip
School of Mathematics and Statistics, Shandong University of Technology,\\
Zibo 255049, China\\
E-mail: zxlrghj@163.com\\
\end{center}

\bigskip
\centerline { \bf  Abstract}
\bigskip
\leftskip10truemm \rightskip10truemm \noindent
In this note, we give the Cohen type theorem for uniformly $S$-Noetherian modules and the Eakin-Nagata type theorem for uniformly $S$-Noetherian rings. We also solve an open question proposed by   Kim and  Lim \cite[Question 4.10]{KL20}.
\\
\vbox to 0.3cm{}\\
{\it Key Words:} uniformly $S$-Noetherian ring; uniformly $S$-Noetherian module; Cohen type Theorem; Eakin-Nagata type Theorem.\\
{\it 2020 Mathematics Subject Classification:} 13E05, 13C12.

\leftskip0truemm \rightskip0truemm
\bigskip
\section{Introduction}
Throughout this note, all rings are commutative rings with identity and all modules are unitary. Let $R$ be a ring. We always denote by $S$ a multiplicative subset of $R$, that is, $1\in S$ and $s_1s_2\in S$ for any $s_1\in S$, $s_2\in S$. let $M$ be an $R$-module. Denote by $\Ann_R(M)=\{r\in R\mid rM=0\}$. For a subset $U$ of  $M$,  denote by $\langle U\rangle$ the $R$-submodule of $M$ generated by $U$.

In the development of Noetherian rings,  Cohen type theorem and Eakin-Nagata type theorem are very crucial.
In the early 1950s, Cohen \cite{c50} showed that a ring $R$ is Noetherian if and only if every prime ideal of $R$ is finitely generated, which is called Cohen type theorem now. 
Recently, Parkash and  Kour \cite{pk21} generalized and extended  Cohen type theorem to Noetherian modules: a finitely generated $R$-module $M$ is Noetherian if and only if for every prime ideal $\p$ of $R$ with $\Ann(M)\subseteq \p$, there exists a finitely generated  submodule $N^\p$ of $M$ such that $\p M\subseteq N^\p\subseteq M(\p)$, where $M(\p):=\{x\in M \mid sx\in \p M $ for some $s\in R \setminus \p \}$. In the late 1960s, Eakin and
Nagata independently found that if $R\subseteq T$ be an extension of rings with $T$ a finitely generated $R$-module, then  $R$ is a Noetherian ring if and only if so is $T$ (see \cite{E68,N68}). And this well-known result is called Eakin-Nagata type theorem now.

In the past few decades, several generalizations of Noetherian rings (modules) have been extensively studied.  In 2002, Anderson and Dumitrescu \cite{ad02} introduced the notions of  $S$-Noetherian rings and $S$-Noetherian  modules. They also considered the Cohen type Theorem and  Eakin-Nagata type  Theorem for $S$-Noetherian rings
\cite[Proposition 4, Corollary 7]{ad02}.  Recently, Kim and Lim \cite{KL20} gave a  new proof of the Cohen type theorem for $S$-Noetherian modules and a  generalization of the Eakin-Nagata type theorem for $S$-Noetherian ring. They also  showed that if an $R$-module $M$ is faithful $S$-Noetherian with $S$ consisting of non-zero-divisors, then $R$ itself is an $S$-Noetherian ring, and latter they ask if the regularity of $S$ is essential? (see \cite[Proposition 3.7, Question 4.10]{KL20})

By noticing the elements chosen in $S$ in some concepts of $S$-versions of classical ones are not ``uniform'' in general, Zhang \cite{z21} recently introduced the notions of uniformly $S$-torsion modules, uniformly $S$-exact sequences etc. Utilizing the ``uniform'' ideas, Qi and Kim etc. \cite{QKWCZ} introduced the notions of uniformly $S$-Noetherian rings and uniformly $S$-Noetherian modules, and then distinguished them with the classical ones.  The main motivation of this paper is to investigate Cohen type Theorem and  Eakin-Nagata type Theorem for uniformly $S$-Noetherian rings and modules. More precisely, we showed that if  $S$ is anti-Archimedean, then an $R$-module $M$ is $u$-$S$-Noetherian if and only if there is an $s\in S$ such that $M$ is $s$-finite, and for every prime ideal $\p$ of $R$ with $\Ann_R(M)\subseteq \p$, there exists an $s$-finite submodule $N^\p$ of $M$ satisfying that $\p M\subseteq N^\p\subseteq M(\p)$ (see Theorem \ref{main});  and if $R\subseteq T$ be an extension of rings with $T$ an $S$-finite $R$-module, then  $R$ is an uniformly $S$-Noetherian ring if and only if so is $T$ (see Theorem \ref{en}). Moreover, we obtain that if there exists a faithful $R$-module $M$ which is also (resp., uniformly) $S$-Noetherian, then $R$ itself is an (resp., a uniformly) $S$-Noetherian ring, solving the open problem proposed by \cite[Question 4.10]{KL20} (See Theorem \ref{faith} and Theorem \ref{conj}).

\section{main results}
Let $R$ be a ring. Recall from \cite{ad02} that an $R$-module $M$ is \emph{$S$-finite} if for any submodule $N$ of $M$, there is an element $s\in S$ and a finitely generated $R$-module $F$ such that $sN\subseteq F\subseteq N$. In this case, we also say $M$ is $s$-finite.
Moreover, an $R$-module $M$ is called an \emph{$S$-Noetherian module} if every submodule of $M$ is $S$-finite, and a ring $R$ is called an \emph{$S$-Noetherian ring} if $R$ itself is an $S$-Noetherian $R$-module. Note that the choice of $s$ in these two concepts is decided by the submodules or ideals of the given module or ring.

To fill the gap of ``uniformity'' in the concept of $S$-Noetherian rings and $S$-Noetherian modules, the authors in \cite{QKWCZ} introduced the notions of uniformly $S$-Noetherian rings and uniformly $S$-Noetherian modules, and we restate them as follows.

\begin{definition}\label{us-no-module}\cite[Definition 2.1, Definition 2.6]{QKWCZ} Let $R$ be a ring and $S$ a multiplicative subset of $R$. An $R$-module $M$ is called a uniformly $S$-Noetherian $R$-module $($with respect to $s)$ provided the set of all submodules of $M$ is $s$-finite for some $s\in S$. A ring $R$ is called a uniformly $S$-Noetherian ring $($with respect to $s)$ if $R$ itself is a uniformly $S$-Noetherian $R$-module $($with respect to $s)$.
\end{definition}

We obviously have the following implications for both rings and modules:
$${\boxed{\mbox{Notherian}}}\Longrightarrow {\boxed{$u$\mbox{-}$S$\mbox{-Notherian}}}\Longrightarrow {\boxed{$S$\mbox{-Notherian}}}$$
However, the converses are not correct in general (see \cite[Example 2.2, Example 2.5]{QKWCZ} respectively). Recall that a multiplicative subset $S$ of $R$ is said to be anti-Archimedean if $\bigcap\limits_{n\geq 1}s^nR\bigcap S\not=\emptyset.$ The anti-Archimedean condition is very important in some results of $S$-Noetherian rings, such as Hilbert Theorem for $S$-Noetherian rings etc. (see
\cite[Proposition 9, Proposition 10]{ad02}). It is easy to verify that the  multiplicative set given in \cite[Example 2.5]{QKWCZ} is not anti-Archimedean. Now we give an example of $S$-Noetherian ring which is not uniformly $S$-Noetherian when $S$ is anti-Archimedean.

\begin{example}\label{exam-not-ut-1} Let $R$ be a valuation domain whose valuation group is the additive group $G=\mathbb{R}[x]$ of all polynomials with coefficients in the field $\mathbb{R}$ of real numbers, and the order is defined by $f(x)>0$ if its leading coefficient   $>0$. Let $S=R\setminus\{0\}$ the set of all nonzero elements of $R$. Then $S$ is anti-Archimedean, and $R$ is $S$-Noetherian but not uniformly  $S$-Noetherian.
 \end{example}
 \begin{proof}
First, we will show  $S$ is anti-Archimedean. Denote by $v$ the valuation of $R\setminus\{0\}$ to $G$. Let $s$ be a nonzero element in $R$.  Let $s'$ be an nonzero element in $R$ such that $\deg(v(s'))>\deg(v(s))$. Then we have $v(s')>nv(s)=v(s^n)$ for any positive integer $n$.  So $s'\in \bigcap\limits_{n\geq 1}s^nR\bigcap S$ for any $s\in S$, that is, $S$ is anti-Archimedean.

Then, we have that  $R$ is $S$-Noetherian.  Indeed, let $I$ be an nonzero ideal of $R$ and $0\not=s\in I$. Then $sI\subseteq sR\subseteq I$. It follows that  $R$ is $S$-Noetherian.

Finally, we claim that $R$ is not  uniformly  $S$-Noetherian. Suppose $R$ is  uniformly  $S$-Noetherian with respect to some $s\in S$. suppose $\deg(v(s))=n$. Then the $R_{s}$-ideal generated by $\{v^{-1}(x^{n+1}),v^{-1}(x^{n+2}),\dots\}$ is not finitely generated, where $R_s$ is the localization of $R$ at $S'=\{1,s,s^2,\dots\}$. So  $R_s$ is not Noetherian. Hence $R$ is not uniformly  $S$-Noetherian by \cite[Lemma 2.3]{QKWCZ}.
\end{proof}

Recently, Parkash and  Kour \cite{pk21} generalized and extended  Cohen type theorem to Noetherian modules: a finitely generated $R$-module $M$ is Noetherian if and only if for every prime ideal $\p$ of $R$ with $\Ann(M)\subseteq \p$, there exists a finitely generated  submodule $N^\p$ of $M$ such that $\p M\subseteq N^\p\subseteq M(\p)$, where $M(\p):=\{x\in M \mid sx\in \p M $ for some $s\in R \setminus \p \}$. Latter, Zhang \cite{ztwocohen} extended this result to $S$-Noetherian modules and $w$-Noetherian modules. In the following, we give the result for uniformly $S$-Noetherian  modules when $S$ is anti-Archimedean.

\begin{theorem}\label{main}\textbf{$($Cohen type theorem for uniformly $S$-Noetherian modules$)$} Let $R$ be a ring and $S$ an anti-Archimedean  multiplicative subset of $R$.  Then an $R$-module $M$ is uniformly $S$-Noetherian if and only if there exists $s\in S$ such that $M$ is $s$-finite, and for every prime ideal $\p$ of $R$ with $\Ann_R(M)\subseteq \p$, there exists an $s$-finite submodule $N^\p$ of $M$ satisfying that $\p M\subseteq N^\p\subseteq M(\p)$, where $M(\p)=\{x\in M \mid sx\in \p M $ for some $s\in R \setminus \p \}$.
\end{theorem}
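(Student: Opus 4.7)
The plan is to bootstrap off the classical Cohen-type theorem for modules of Parkash and Kour \cite{pk21} applied to the localization $M_s$ over $R_s$, and then lift the conclusion back to $M$ using the anti-Archimedean property of $S$. The forward direction is essentially a tautology: if $M$ is uniformly $S$-Noetherian with respect to $s$, then $M$ is itself $s$-finite and, for every prime $\p\supseteq\Ann_R(M)$, the choice $N^\p:=\p M$ is $s$-finite (as a submodule of $M$) and automatically satisfies $\p M\subseteq N^\p\subseteq M(\p)$ since $\p M\subseteq M(\p)$.

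For the converse, fix an $s\in S$ furnished by the hypothesis. The first step is to show that $M_s$ is Noetherian over $R_s$ by verifying the Parkash--Kour criterion. Since $M$ is $s$-finite, $M_s$ is finitely generated over $R_s$. Any prime $\q$ of $R_s$ with $\Ann_{R_s}(M_s)\subseteq\q$ has the form $\q=\p R_s$ for a unique prime $\p$ of $R$ avoiding $\{1,s,s^2,\dots\}$, and a short pull-back argument gives $\Ann_R(M)\subseteq\p$. The hypothesis then provides an $s$-finite submodule $N^\p$ of $M$ with $\p M\subseteq N^\p\subseteq M(\p)$; localizing, $(N^\p)_s$ is finitely generated over $R_s$ (because $N^\p$ is $s$-finite), and a routine check from the definitions gives the identification $M_s(\q)=M(\p)_s$, hence $\q M_s\subseteq(N^\p)_s\subseteq M_s(\q)$. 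Parkash--Kour then yields that $M_s$ is Noetherian over $R_s$.

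The second step is the bootstrap. Using that $S$ is anti-Archimedean, choose $t\in\bigcap_{n\geq 1}s^nR\cap S$, so that for every $n\geq 1$ we can write $t=s^nr_n$ for some $r_n\in R$. For any submodule $K\subseteq M$, the Noetherianity of $M_s$ supplies finitely many $y_1,\dots,y_m\in K$ with $K_s=\langle y_1/1,\dots,y_m/1\rangle_{R_s}$ (denominators can be absorbed since $s$ is a unit in $R_s$); set $F_K:=\langle y_1,\dots,y_m\rangle_R\subseteq K$. For any $y\in K$, clearing denominators in an expression for $y/1$ in terms of the $y_i/1$ produces $s^Ny\in F_K$ for some $N$ depending on $y$, and then $ty=r_N(s^Ny)\in F_K$. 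Hence $tK\subseteq F_K\subseteq K$, proving that $M$ is uniformly $S$-Noetherian with respect to $t$.

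The main obstacle is precisely this last step: a priori the exponent $N$ making $s^Ny$ land in $F_K$ depends on $y$, so one needs a single element of $S$ that simultaneously absorbs every such power of $s$. The anti-Archimedean hypothesis is exactly what delivers such a $t$, and the identity $ty=r_N(s^Ny)$ then kills the $y$-dependence uniformly, so that the same $t$ witnesses $s$-finiteness for every submodule $K$ of $M$.
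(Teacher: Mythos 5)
Your proof is correct, but it takes a genuinely different route from the paper's. The paper adapts the classical Cohen-type argument directly to the uniform $S$-setting: it applies Zorn's Lemma to the family of submodules of $M$ that are not $s^k$-finite for any $k$ (invoking anti-Archimedeanness to reduce to the case where this family is non-empty), shows that a maximal element $N$ has $(N:M)=\p$ prime and $M(\p)\subseteq N$, and then derives a contradiction from the hypothesis on $N^\p$. You instead localize at $\{1,s,s^2,\dots\}$, verify the Parkash--Kour criterion for $M_s$ over $R_s$ (via the correspondence $\mathfrak{q}=\p R_s$, the identification $M_s(\mathfrak{q})=M(\p)_s$, and the fact that $s$-finite submodules localize to finitely generated ones), and then lift the Noetherianity of $M_s$ back to uniform $S$-Noetherianity of $M$ by means of an element $t\in\bigcap_{n\geq 1}s^nR\cap S$. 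Your route is more modular---it uses the classical theorem as a black box rather than re-running its proof---and it makes transparent that both arguments use anti-Archimedeanness for exactly the same purpose: absorbing an unbounded family of powers of $s$ into a single element of $S$. The paper's self-contained version, by contrast, localizes the use of that hypothesis to the single step of making the Zorn family non-empty, which is what motivates the author's remark about whether the hypothesis can be dropped. One detail worth writing out explicitly in your second step: since the localization map $M\to M_s$ need not be injective, the equality $y/1=\sum_i(a_i/s^{n_i})(y_i/1)$ in $M_s$ only yields that $s^{N}y-\sum_i b_iy_i$ is annihilated by a further power of $s$, so the exponent fed into $t=s^nr_n$ must include that extra power; this is harmless but should be said.
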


\begin{proof} Suppose that $M$ is a uniformly $S$-Noetherian $R$-module. Then there is $s\in S$ such that the set of all submodules of $M$ is $s$-finite.
 Let $\p$ be a prime ideal with $\Ann_R(M)\subseteq \p$.  If we take $N^\p=\p M$, then $N^\p$ is certainly an $s$-finite submodule of $M$ satisfying $\p M\subseteq N^\p\subseteq M(\p)$.

On the other hand, let $s'\in \bigcap\limits_{n\geq 1}s^nR\bigcap S$. If $M$ is  uniformly $S$-Noetherian with respect to $s'$, then we are done. Otherwise, we will show  $M$ is  uniformly $S$-Noetherian with respect to $s^{n}$ for some positive integer $n$. On contrary, suppose  that $M$ is not uniformly $S$-Noetherian with respect to $s^k$ for any positive integer $k$. Let $\N$ be the set of all submodules of $M$ which are not $s^k$-finite for any positive integer $k$. We can assume $\N$ is non-empty.  Indeed, on contrary assume that for each submodule $N$ of $M$, there exists a nonnegative integer $k_N$  such that $N$ is $s^{k_N}$-finite. Since $S$ is anti-Archimedean, then there is an $s'\in \bigcap\limits_{n\geq 1}s^nR\bigcap S$ such that all submodules of $M$ are $s'$-finite. Hence  $M$ is uniformly $S$-Noetherian with respect to $s'$, and so the conclusion holds.

Make a partial order on $\N$ by defining  $N_1\leq N_2$ if and only if $N_1\subseteq N_2$ in $\N$. Let $\{N_i\mid i\in \Lambda\}$  be a chain in  $\N$. Set $N:=\bigcup\limits_{i\in \Lambda}N_i$. Then $N$ is not $s^k$-finite for any positive integer $k$. Indeed, suppose $s^{k_0}N\subseteq \langle x_1, \dots , x_n\rangle\subseteq N$ for some positive integer $k_0$. Then there exists $i_0\in \Lambda$ such that $\{ x_1, \dots , x_n\}\subseteq N_{i_0}$. Thus $s^{k_0}N_{i_0}\subseteq sN\subseteq \langle x_1, \dots , x_n\rangle\subseteq N_{i_0}$ implying that $N_{i_0}$ is $s^{k_0}$-finite, which is a contradiction.
By Zorn's Lemma  $\N$ has a maximal element, which is also denoted by $N$. Set $$\p:=(N:M)=\{r\in R \mid rM\subseteq N\}.$$

\textbf{(1) Claim that $\p$ is a prime ideal of $R$}. Assume on the contrary that there exist $a,b\in R \setminus \p$ such that $ab\in \p$. Since $a,b\in R \setminus \p$, we have $aM\not\subseteq N$ and $bM\not\subseteq N$. Therefore $N+aM$ is $s^{k_0}$-finite for some nonnegative integer $k_0$. Let $\{y_1,\dots,y_m\}$ be a subset of $N+aM$ such that $s^{k_0}(N+aM)\subseteq \langle y_1,\dots,y_m\rangle$. Write $y_i=w_i+az_i$ for some $w_i\in N$ and $z_i\in M\ (1\leq i\leq m)$. Set $L:=\{x\in M\mid ax\in N\}$. Then $N+bM\subseteq L$, and hence $L$ is also $s^{k_1}$-finite  for some nonnegative integer $k_1$. Let $\{x_1,\dots,x_k\}$ be a subset of $L$ such that $s^{k_1}L \subseteq\langle x_1,\dots,x_k \rangle$. Let $n \in N$ and write $$s^{k_0}n=\sum\limits_{i=1}^mr_iy_i=\sum\limits_{i=1}^mr_iw_i+a\sum\limits_{i=1}^mr_iz_i.$$
Then $\sum\limits_{i=1}^mr_iz_i\in L$. Thus $s^{k_1}\sum\limits_{i=1}^mr_iz_i=\sum\limits_{i=1}^kr'_ix_i$ for some $r'_i\in R$ ($i=1,\dots,k$).
So $$s^{k_0+k_1}n=\sum\limits_{i=1}^msr_iw_i+\sum\limits_{i=1}^kr'_iax_i.$$  And thus $s^{k_0+k_1}N\subseteq \langle w_1,\dots, w_m, ax_1,\dots, ax_k \rangle \subseteq N$ implying that $N$ is $s^{k_0+k_1}$-finite, which is a contradiction. Hence $\p$ is a prime ideal of $R$.

\textbf{(2) Claim that $M(\p)\subseteq N$}. Suppose on the contrary that there exists $y\in M(\p)$ such that $y\not\in N$. Then there exists $t\in R \setminus \p$ such that $ty\in \p M=(N:M)M\subseteq N$. As $t\not\in \p=(N:M)$,  it follows that $tM\not\subseteq N$. Therefore $N+tM$ is $s^{k_2}$-finite for some nonnegative integer $k_2$. Let $\{u_1,\dots,u_m\}$ be a subset of $N+tM$ such that $s^{k_2}(N+tM)\subseteq \langle u_1,\dots,u_m \rangle$ for some $s^{k_2}\in S$. Write $u_i=w_i+tz_i ~  (i=1,\dots,m)$ with $w_i\in N$ and $z_i\in M$. Set $T:=\{x\in M \mid tx\in N\}$. Then $N\subset N+Ry\subseteq T$, and hence $T$ is $s^{k_3}$-finite for some nonnegative integer $k_3$. Then there exists a subset $\{v_1,\dots,v_l\}$ of $T$ such that $s^{k_3}T\subseteq \langle v_1,\dots,v_l\rangle$. Let $n$ be an element in $N$. Then $$s^{k_2}n=\sum\limits_{i=1}^mr_iu_i=\sum\limits_{i=1}^mr_iw_i+t\sum\limits_{i=1}^mr_iz_i.$$ Thus $\sum\limits_{i=1}^mr_iz_i\in T$. So $s^{k_3}\sum\limits_{i=1}^mr_iz_i=\sum\limits_{i=1}^lr'_iv_i$ for some $r'_i\in R ~(i=1,\dots,l)$. Hence $s^{k_2+k_3}n=\sum\limits_{i=1}^ms_4r_iw_i+\sum\limits_{i=1}^lr'_itv_i.$ Thus $s^{k_2+k_3}N\subseteq \langle w_1,\dots, w_m, tv_1,\dots, tv_l \rangle $ implying that $N$ is $s^{k_2+k_3}$-finite, which is a contradiction. Hence $M(\p)\subseteq N$.

Finally, we will show $M$ is uniformly $S$-Noetherian. Since $M$ is $s$-finite, there exists a finitely generated submodule $F=\langle m_1, \dots , m_k\rangle$ of $M$ such that $sM\subseteq F$. Claim that $\p\cap S'=\emptyset$ where $S'=\{1,s,s^2,\cdots\}$. Indeed, if $s^{k_4}\in \p$ for  some nonnegative integer $k_4$, then $s^{k_4}M\subseteq N\subseteq M$. So $s^{1+k_4}N\subseteq s^{1+k_4}M\subseteq s^{k_4}F \subseteq s^{k_4}M \subseteq  N$ implies that $N$ is $s^{1+k_4}$-finite, which is a  contradiction. Note that  $$\p=(N:M)\subseteq (N:F)\subseteq(N:sM)=(\p:s)=\p$$ since $\p$ is a prime ideal of $R$. So $\p=(N:F)=(N:\langle m_1, \dots , m_k\rangle)=\bigcap\limits_{i=1}^k(N:Rm_i)$. By \cite[Proposition 1.11]{am69}, $\p=(N:Rm_j)$ for some $1\leq j\leq k$. Since $m_j\not\in N$, it follows that $N+Rm_j$ is $s^{k_5}$-finite for  some nonnegative integer $k_5$. Let $\{y_1,\dots,y_m\}$ be a subset of $N+Rm_j$ such that $s^{k_5}(N+Rm_j)\subseteq \langle y_1,\dots,y_m \rangle$. Write $y_i=w_i+a_im_j$ for some $w_i\in N$ and $a_i\in R ~(i=1,\dots,m)$. Let $n \in N$. Then $s^{k_5}n=\sum\limits_{i=1}^mr_i(w_i+a_im_j)=\sum\limits_{i=1}^mr_iw_i+(
\sum\limits_{i=1}^mr_ia_i)m_j$. Thus $(\sum\limits_{i=1}^mr_ia_i)m_j\in N $. So $\sum\limits_{i=1}^mr_ia_i\in \p$. Thus $s^{k_5}N\subseteq \langle w_1,\dots,  w_m \rangle +\p m_j$. As $\Ann_R(M)\subseteq (N:M)=\p$, there exists an $s$-finite submodule $N^\p$ of $M$ such that $\p M\subseteq N^\p\subseteq M(\p)$. Thus
\begin{eqnarray*}
  s^{k_5}N & \subseteq & \langle w_1,\dots, w_m \rangle + \p m_j \\
   & \subseteq & \langle w_1,\dots, w_m \rangle + \p M \\
   & \subseteq & \langle w_1,\dots,  w_m \rangle + N^\p \\
   &\subseteq & \langle w_1,\dots,  w_m \rangle +M(\p) \\
  & \subseteq & N
\end{eqnarray*}
Since $N^\p+\langle w_1,\dots,  w_m \rangle $ is $s$-finite, it follows that $N$ is  $s^{1+k_5}$-finite,  which is a  contradiction.
Consequently, we have $M$ is uniformly $S$-Noetherian with respect to $s^{k'}$ for some nonnegative integer $k'$.
\end{proof}

\begin{remark} We do not know whether the condition ``$S$ is anti-Archimedean'' in Theorem \ref{main} can be removed. Note that this condition is mainly use to show the set $\N$ in the proof of Theorem \ref{main} can be assumed to be non-empty.
\end{remark}

Taking $S=\{1\}$,  we can recover  Parkash and Kour's result.

\begin{corollary} \cite[Theorem 2.1]{pk21} Let $R$ be a ring and $M$ a finitely generated $R$-module. Then $M$ is Noetherian if and only if for every prime ideal $\p$ of $R$ with $\Ann_R(M)\subseteq \p$, there exists a finitely generated submodule $N^\p$ of $M$ such that $\p M\subseteq N^\p\subseteq M(\p)$.
\end{corollary}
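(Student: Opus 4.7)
The plan is to derive this corollary by specializing Theorem \ref{main} to the trivial multiplicative subset $S=\{1\}$. The key observation is that, under this choice, every notion appearing in Theorem \ref{main} collapses to its classical counterpart, so the corollary becomes a direct translation.

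First I would verify that $S=\{1\}$ satisfies the anti-Archimedean hypothesis. Since $1^n R \cap \{1\} = \{1\}$ is nonempty for every $n\geq 1$, the condition $\bigcap_{n\geq 1}1^n R\cap S\neq\emptyset$ holds trivially. Next, I would unpack the uniform $S$-terminology: an $R$-module is $1$-finite exactly when it is finitely generated, so the requirement ``the set of all submodules of $M$ is $1$-finite for some $s\in\{1\}$'' in Definition \ref{us-no-module} says precisely that every submodule of $M$ is finitely generated, i.e., $M$ is Noetherian in the classical sense. Likewise, an $s$-finite submodule $N^{\p}$ with $s=1$ is just a finitely generated submodule. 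Finally, one checks that the definition of $M(\p)$ in Theorem \ref{main} coincides with the Parkash--Kour definition, since the element $s$ appearing inside the definition is drawn from $R\setminus\p$ and is unrelated to the external multiplicative subset $S$.

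With these dictionary entries in place, Theorem \ref{main} specialized to $S=\{1\}$ asserts that $M$ is Noetherian if and only if (a) $M$ is finitely generated and (b) for every prime $\p$ of $R$ with $\Ann_R(M)\subseteq\p$ there is a finitely generated submodule $N^{\p}$ with $\p M\subseteq N^{\p}\subseteq M(\p)$. Since the corollary already assumes $M$ to be finitely generated in its hypotheses, condition (a) is automatic in both directions, and what remains is precisely the stated equivalence. No genuine obstacle arises: the entire argument amounts to checking that the anti-Archimedean hypothesis is vacuous and that the uniform $S$-language reduces to classical language for $S=\{1\}$.
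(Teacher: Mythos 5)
Your proposal is correct and matches the paper's intent exactly: the paper offers no separate proof beyond the remark ``Taking $S=\{1\}$, we can recover Parkash and Kour's result,'' and your careful verification that $\{1\}$ is anti-Archimedean, that $1$-finite means finitely generated, and that $M(\p)$ is unchanged is precisely the specialization being invoked.
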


There is a direct corollary of Theorem \ref{main}.
\begin{corollary}\label{cor-cohen} Let $R$ be a ring and $S$ an anti-Archimedean multiplicative subset of $R$.  Then an $R$-module $M$ is uniformly $S$-Noetherian if and only if there exists $s\in S$ such that $M$ is $s$-finite and $\p M$ is  $s$-finite for every prime ideal $\p$ of $R$.
\end{corollary}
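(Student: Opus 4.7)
The plan is to deduce this corollary directly from Theorem \ref{main}, since one direction is essentially immediate and the other amounts to verifying the hypotheses of the theorem with a trivial choice of the submodules $N^{\p}$.

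For the forward direction, suppose $M$ is uniformly $S$-Noetherian with respect to some $s \in S$. Then by Definition \ref{us-no-module} every submodule of $M$ is $s$-finite, so in particular $M$ itself is $s$-finite, and for every prime ideal $\p$ of $R$ the submodule $\p M$ is $s$-finite. This gives the stated necessary condition with no need to distinguish primes containing $\Ann_R(M)$.

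For the backward direction, suppose there exists $s \in S$ such that $M$ is $s$-finite and $\p M$ is $s$-finite for every prime ideal $\p$ of $R$. To invoke Theorem \ref{main}, for each prime $\p$ with $\Ann_R(M) \subseteq \p$ I would simply take $N^{\p} := \p M$. This submodule is $s$-finite by hypothesis, and clearly satisfies $\p M \subseteq N^{\p}$. Moreover $N^{\p} \subseteq M(\p)$ holds trivially: for any $x \in \p M$, the element $1 \in R \setminus \p$ witnesses $1 \cdot x = x \in \p M$, so $x \in M(\p)$. Thus all hypotheses of Theorem \ref{main} are met and we conclude that $M$ is uniformly $S$-Noetherian.

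There is no real obstacle here; the corollary is essentially a restatement of Theorem \ref{main} once one observes that the canonical candidate $N^{\p} = \p M$ is forced into the sandwich $\p M \subseteq N^{\p} \subseteq M(\p)$, so the existential clause over $N^{\p}$ collapses to the single $s$-finiteness requirement on $\p M$.
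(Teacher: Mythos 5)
Your proposal is correct and matches the paper's intent: the paper states this as a direct corollary of Theorem \ref{main} without proof, and the choice $N^{\p}=\p M$ (which trivially sits in the sandwich $\p M\subseteq N^{\p}\subseteq M(\p)$) is exactly the observation the paper itself uses in the forward direction of the proof of Theorem \ref{main}. Nothing further is needed.
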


The well-known  Eakin-Nagata type theorem  states that if $R\subseteq T$ be an extension of rings with $T$ a finitely generated $R$-module, then  $R$ is a Noetherian ring if and only if so is $T$ (see \cite{E68,N68}). Next, we give the Eakin-Nagata type theorem for uniformly $S$-Noetherian rings.

\begin{theorem}\label{en}\textbf{$($Eakin-Nagata type theorem for uniformly $S$-Noetherian rings$)$} Let $R$ be a ring, $S$ an anti-Archimedean multiplicative subset of $R$ and $T$ a ring extension of $R$. If $T$ is $S$-finite as an $R$-module. Then the following statements are equivalent.
\begin{enumerate}
\item $R$ is a uniformly $S$-Noetherian ring.
\item $T$ is a uniformly $S$-Noetherian ring.
\item There is $s\in S$ such that $\p T$ is an $s$-finite $T$-ideal for every prime ideal $\p$ of $R$.
\item $T$ is a uniformly $S$-Noetherian $R$-module.
\end{enumerate}
\end{theorem}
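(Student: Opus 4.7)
I would prove the equivalences via the cycle $(1)\Rightarrow(3)\Rightarrow(4)\Rightarrow(1)$, together with the side implications $(4)\Rightarrow(2)\Rightarrow(3)$. The only substantive step is $(3)\Rightarrow(4)$, which invokes the Cohen type Theorem~\ref{main}; the rest amount to transporting a single uniform witness $s\in S$ across the inclusion $R\subseteq T$.

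The easy implications run as follows. For $(1)\Rightarrow(3)$, if $R$ is uniformly $S$-Noetherian with respect to $s\in S$ and $\p$ is a prime of $R$, then $s\p\subseteq\langle p_1,\dots,p_k\rangle_R\subseteq\p$ for some $p_i\in\p$; extending to $T$ gives $s\p T\subseteq\langle p_1,\dots,p_k\rangle_T\subseteq\p T$, so $\p T$ is $s$-finite as a $T$-ideal uniformly in $\p$. For $(4)\Rightarrow(2)$, any $T$-ideal $I$ is an $R$-submodule of $T$, and $sI\subseteq\langle x_1,\dots,x_n\rangle_R\subseteq\langle x_1,\dots,x_n\rangle_T\subseteq I$ shows $I$ is $s$-finite as a $T$-ideal. $(2)\Rightarrow(3)$ is immediate since each $\p T$ is a $T$-ideal, and $(4)\Rightarrow(1)$ is immediate because each ideal of $R$ is an $R$-submodule of $T$ via the inclusion $R\subseteq T$ and so inherits the uniform $s$-finiteness from~(4).

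The main step is $(3)\Rightarrow(4)$. I would apply Theorem~\ref{main} to $T$ viewed as an $R$-module, taking $N^\p:=\p T$ for each prime $\p$ of $R$ with $\Ann_R(T)\subseteq\p$; the inclusion $\p T\subseteq T(\p)$ is immediate by taking $s=1$ in the definition of $T(\p)$. What must be verified is that some single $s^\ast\in S$ makes $T$ an $s^\ast$-finite $R$-module and every $\p T$ an $s^\ast$-finite $R$-submodule of $T$. From~(3) fix $s_1\in S$ with $s_1\p T\subseteq\langle t_1,\dots,t_n\rangle_T\subseteq\p T$ for every such $\p$, and from the $S$-finiteness of $T$ fix $s_2\in S$ with $s_2T\subseteq\langle u_1,\dots,u_m\rangle_R\subseteq T$. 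For any element $\sum_i a_it_i\in\langle t_1,\dots,t_n\rangle_T$ with $a_i\in T$, the relations $s_2a_i=\sum_l r_{il}u_l$ give $s_2\sum_i a_it_i=\sum_{i,l}r_{il}(u_lt_i)$, whence $s_1s_2\p T\subseteq\langle u_lt_i:1\le i\le n,\,1\le l\le m\rangle_R\subseteq\p T$. Thus $s^\ast:=s_1s_2$ is the required witness ($T$ itself is $s_2$-finite, hence $s^\ast$-finite), and Theorem~\ref{main} -- which is where the anti-Archimedean hypothesis on $S$ is consumed -- delivers~(4).

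The principal obstacle is producing a single $s^\ast$ that simultaneously governs both $T$ and every $\p T$ as an $R$-submodule: this is exactly the interaction of the uniformity in~(3), the $S$-finiteness of $T$ over $R$, and the anti-Archimedean property of $S$ inside Theorem~\ref{main}. All remaining implications are essentially formal, once one observes that $R$-generation refines $T$-generation and that an ideal of $R$ is tautologically an $R$-submodule of $T$.
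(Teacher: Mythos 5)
Your proposal is correct, and its crucial step $(3)\Rightarrow(4)$ is essentially the paper's: both produce a single $s^\ast=s_1s_2$ making every $\p T$ an $s^\ast$-finite $R$-submodule of $T$ by multiplying the $T$-generators of $\p T$ against the $R$-generators of $s_2T$, and then feed this into the Cohen type theorem (the paper phrases this via Corollary~\ref{cor-cohen}, i.e.\ Theorem~\ref{main} with $N^\p=\p M$, exactly as you do with $N^\p=\p T$). Where you differ is the decomposition of the remaining implications. The paper closes the cycle as $(1)\Rightarrow(2)\Rightarrow(3)\Rightarrow(4)\Rightarrow(1)$, and its $(1)\Rightarrow(2)$ is the heaviest of the ``easy'' steps: it first shows $T$ is a uniformly $S$-Noetherian $R$-module by writing $T$ as a uniform $S$-image of $R^n$, which requires the auxiliary facts that finite direct sums and $u$-$S$-epimorphic images preserve uniform $S$-Noetherianity (\cite[Lemma 2.12, Proposition 2.13]{QKWCZ}), and only then converts $R$-generation into $T$-generation. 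You instead run $(1)\Rightarrow(3)\Rightarrow(4)\Rightarrow(1)$ with $(4)\Rightarrow(2)\Rightarrow(3)$ on the side: your $(1)\Rightarrow(3)$ is a one-line extension of generators from $\p$ to $\p T$, and your $(4)\Rightarrow(2)$ is exactly the final ``$R$-generators are $T$-generators'' observation from the paper's $(1)\Rightarrow(2)$. The net effect is that your route is more self-contained (it does not invoke the two external lemmas from \cite{QKWCZ}), at the cost of making $(1)\Rightarrow(2)$ pass through the Cohen type theorem rather than being proved directly; both routes consume the anti-Archimedean hypothesis only inside Theorem~\ref{main}, as you correctly note.
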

\begin{proof} $(1)\Rightarrow (2)$  Suppose $R$ is a uniformly $S$-Noetherian ring with respect to some $s_1\in S$. Let $I$ be an ideal of $T$. Since $R\subseteq T$, $I$ is an $R$-submodule of $T$. Suppose $T$ is $s_2$-finite as an $R$-module for some $s_2\in S$. Then $T$ is the image of a uniformly $S$-epimorphism $R^n\rightarrow T$. One can  use the proof of \cite[Lemma 2.12]{QKWCZ} to check $R^n$ is a uniformly $S$-Noetherian $R$-module with respect to $s_1^n$, . So $T$ is  a uniformly $S$-Noetherian $R$-module with respect to $s_1^ns_2$ by \cite[Proposition 2.13]{QKWCZ}. Then there exists $a_1,\dots,a_m\in I$ such that $s_1^ns_2I\subseteq \langle a_1,\dots,a_m\rangle R \subseteq I.$ Thus $s_1^ns_2I\subseteq \langle a_1,\dots,a_m\rangle T \subseteq I.$  Consequently, $T$
is a uniformly $S$-Noetherian ring with respect to $s_1^ns_2$.

$(2)\Rightarrow (3)$ Obvious.

$(3)\Rightarrow (4)$ Let $\p$ be a prime ideal  that satisfies $\Ann_R(T)\subseteq \p$. Then $\p T$ is $s$-finite as an $T$-ideal. So there exists $p_1,\dots,p_m\in \p$ such that $s(\p T)\subseteq \langle p_1,\dots,p_m\rangle T\subseteq \p T$. Since $T$ is $S$-finite, there exists $s'\in S$ and $t_1,\dots,t_n$ such that $s'T\subseteq \langle t_1,\dots,t_n\rangle R\subseteq T$. Therefore, we have
\begin{eqnarray*}
  s's(\p T) & \subseteq & s' \langle p_1,\dots,p_m\rangle T  \\
   & = &  s'p_1T+\cdots+s'p_mT  \\
   & \subseteq& p_1(t_1R+\cdots t_nR)+\cdots+ p_m(t_1R+\cdots t_nR)\\
   &\subseteq & \p T
\end{eqnarray*}
Hence $\p T$ is $s's$-finite as an $R$-module. It follows by  Corollary \ref{cor-cohen} that $T$ is a uniformly $S$-Noetherian $R$-module.

$(4)\Rightarrow (1)$ Suppose $T$ is a uniformly $S$-Noetherian $R$-module. Since $R$ is an $R$-submodule of $T$, $R$ is also a   uniformly $S$-Noetherian $R$-module by \cite[Lemma 2.12]{QKWCZ}. It follows that $R$ is a uniformly $S$-Noetherian ring.
\end{proof}

Let $R$ be a ring and $M$ an $R$-module. Recall that $M$ is faithful if $\Ann_R(M)=0.$ We say $M$ is  $S$-faithful if $t\Ann_R(M)=0$ for some $t\in S$. Hence faithful $R$-modules are all $S$-faithful. It is well-known that if a faithful $R$-module $M$ is Noetherian, then $R$ itself is a Noetherian ring (see \cite[Exercise 2.32]{fk16}).

\begin{theorem}\label{faith} Let $R$ be a ring, $S$ a multiplicative subset of $R$ and $M$ an $S$-faithful $R$-module. If $M$ is a uniformly $S$-Noetherian $R$-module, then $R$ is a uniformly $S$-Noetherian ring.
\end{theorem}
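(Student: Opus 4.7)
The plan is to embed $R$, modulo a bounded-torsion kernel, into a finite power of $M$, and then transfer the uniform $S$-Noetherian property back to $R$. Since $M$ is uniformly $S$-Noetherian with respect to some $s\in S$, in particular $M$ itself is $s$-finite, so there exist $m_1,\dots,m_k\in M$ with $sM\subseteq \langle m_1,\dots,m_k\rangle$. I would then introduce the $R$-linear map $\varphi\colon R\to M^k$ defined by $\varphi(r)=(rm_1,\dots,rm_k)$. If $r\in\ker(\varphi)$, then $rm_i=0$ for all $i$, whence $rsM\subseteq r\langle m_1,\dots,m_k\rangle=0$, so $s\ker(\varphi)\subseteq \Ann_R(M)$. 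The $S$-faithfulness hypothesis furnishes $t\in S$ with $t\Ann_R(M)=0$, giving $ts\ker(\varphi)=0$.

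Next, by (the finite-direct-sum analog of) \cite[Lemma 2.12]{QKWCZ}, the module $M^k$ is uniformly $S$-Noetherian with respect to $s^k$, and hence so is its submodule $\varphi(R)$. For an arbitrary ideal $I$ of $R$, I would apply this property to the submodule $\varphi(I)\subseteq\varphi(R)$ to produce elements $r_1,\dots,r_p\in I$ such that
$$s^k\varphi(I)\subseteq \langle \varphi(r_1),\dots,\varphi(r_p)\rangle \subseteq \varphi(I).$$
Pulling back, for each $r\in I$ one can write $s^k\varphi(r)=\sum_{i=1}^p b_i\varphi(r_i)$ with $b_i\in R$, so $s^kr-\sum_i b_ir_i\in\ker(\varphi)$. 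Multiplying by $ts$ and using $ts\ker(\varphi)=0$ yields $ts^{k+1}r=\sum_i tsb_ir_i\in\langle r_1,\dots,r_p\rangle$. Hence $ts^{k+1}I\subseteq \langle r_1,\dots,r_p\rangle\subseteq I$, and since the element $ts^{k+1}\in S$ depends only on the fixed data $s,t,k$ and not on $I$, $R$ is uniformly $S$-Noetherian with respect to $ts^{k+1}$.

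The main obstacle is controlling $\ker(\varphi)$: without $S$-faithfulness, $\varphi$ would be injective only modulo $\Ann_R(M)$, and one cannot in general annihilate $\Ann_R(M)$ by a single element of $S$. This is exactly the point at which the hypothesis $t\Ann_R(M)=0$ intervenes, allowing us to promote the image-level $s^k$-finiteness of $\varphi(I)$ to a single uniform $ts^{k+1}$-finiteness bound on every ideal $I$ of $R$.
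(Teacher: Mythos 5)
Your proposal is correct and follows essentially the same route as the paper: the same map $\varphi(r)=(rm_1,\dots,rm_k)$ into $M^k$, the same use of $S$-faithfulness to get $ts\Ker(\varphi)=0$, and the same transfer of the uniform bound from $\mathrm{Im}(\varphi)$ back to $R$. The only cosmetic difference is that you unwind the final exact-sequence step $0\to\Ker(\varphi)\to R\to \mathrm{Im}(\varphi)\to 0$ into an explicit ideal-by-ideal computation (as the paper itself does in its proof of the companion Theorem~\ref{conj}), arriving at the same uniform element $ts^{k+1}$.
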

\begin{proof} Suppose $M$ is a uniformly $S$-Noetherian $R$-module with respect to some $s\in S$. Then $M$ is $s$-finite, and so there exists $m_1,\dots,m_n\in M$ such that $sM\subseteq \langle m_1,\dots,m_n\rangle\subseteq M$. Consider the $R$-homomorphism $\phi:R\rightarrow M^n$ given by $\phi(r)=(rm_1,\dots,rm_n)$. We claim that $s\Ker(\phi)=0$. Indeed, let $r\in \Ker(\phi)$. Then $rm_i=0$ for each $i=1,\dots,n$. Hence $srM\subseteq r\langle m_1,\dots,m_n\rangle=0$. And hence $sr\in \Ann_R(M)$. Since $M$ is an $S$-faithful $R$-module, we have $tsr=0$ for some $t\in S$, and so $ts\Ker(\phi)=0$. Note that the $R$-module $M^n$ is uniformly $S$-Noetherian with respect to  $s^n$ by the proof of \cite[Lemma 2.12]{QKWCZ}. Hence the $R$-module $\Im(\phi)$ is also uniformly $S$-Noetherian with respect to  $s^n$. Considering the exact sequence $$0\rightarrow \Ker(\phi)\rightarrow R \rightarrow \Im(\phi) \rightarrow0,$$  we have $R$ is a uniformly $S$-Noetherian ring with respect to  $ts^{n+1}$.
\end{proof}

Recently, the authors in \cite[Proposition 3.7]{KL20} showed that Theorem \ref{faith} also holds for $S$-Noetherian ring (modules) when $S$ consists of non-zero-divisors, and ask if the condition ``$S$ consists of non-zero-divisors'' is essential (see \cite[Question 4.10]{KL20}). Inspired by the proof of Theorem \ref{faith}, we can show the condition ``$S$ consists of non-zero-divisors'' in \cite[Proposition 3.7]{KL20} can be removed .

\begin{theorem}\label{conj} Let $R$ be a ring, $S$ a multiplicative subset of $R$ and $M$ an $S$-faithful  $R$-module $($for example,  $M$ is a faithful $R$-module$)$. If $M$ is an $S$-Noetherian $R$-module, then $R$ is an $S$-Noetherian ring.
\end{theorem}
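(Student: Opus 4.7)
The plan is to mirror the proof of Theorem \ref{faith} but to allow the witness element in $S$ to depend on the chosen ideal, as the (non-uniform) $S$-Noetherian definition permits. The structural ingredients are identical: the $S$-finiteness of $M$ produces a map $\phi\colon R\to M^n$ whose kernel is annihilated up to an element of $S$, and one exploits the $S$-Noetherian-ness of $M^n$.

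First, since $M$ is $S$-Noetherian, $M$ itself is $S$-finite: fix $s_0\in S$ and $m_1,\dots,m_n\in M$ with $s_0 M\subseteq \langle m_1,\dots,m_n\rangle\subseteq M$. Define $\phi\colon R\to M^n$ by $\phi(r)=(rm_1,\dots,rm_n)$. If $r\in\Ker(\phi)$ then $r\langle m_1,\dots,m_n\rangle=0$, so $s_0 r M=0$, i.e.\ $s_0 r\in\Ann_R(M)$. Since $M$ is $S$-faithful, choose $t\in S$ with $t\Ann_R(M)=0$; then $ts_0\Ker(\phi)=0$. A routine induction on $n$, using the short exact sequence $0\to M\to M^n\to M^{n-1}\to 0$ together with the fact that $S$-Noetherian modules are closed under extensions, shows that $M^n$ is $S$-Noetherian as well.

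Now pick an arbitrary ideal $I$ of $R$. Its image $\phi(I)$ is an $R$-submodule of the $S$-Noetherian module $M^n$, hence $S$-finite: there exist $s_1\in S$ and $r_1,\dots,r_k\in I$ with $s_1\phi(I)\subseteq \langle \phi(r_1),\dots,\phi(r_k)\rangle$. Given any $r\in I$, write $s_1\phi(r)=\sum_j a_j\phi(r_j)$ with $a_j\in R$; this forces $s_1 r-\sum_j a_jr_j\in\Ker(\phi)$, and multiplying by $ts_0$ kills the error term, yielding $ts_0s_1 r\in\langle r_1,\dots,r_k\rangle$. Therefore $(ts_0s_1)I\subseteq\langle r_1,\dots,r_k\rangle\subseteq I$, so $I$ is $(ts_0s_1)$-finite. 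Since $I$ was arbitrary, $R$ is $S$-Noetherian.

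The only place where care is needed, and the only real departure from Theorem \ref{faith}, is that the witness $ts_0s_1$ depends on $I$ through $s_1$; the non-uniform definition of $S$-Noetherian tolerates exactly this, so there is no genuine obstacle, and in particular no anti-Archimedean assumption is required. As a bonus the hypothesis that $S$ consists of non-zero-divisors, used in \cite[Proposition 3.7]{KL20}, is seen to be superfluous, which is precisely the content of the open question \cite[Question 4.10]{KL20}.
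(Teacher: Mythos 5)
Your proposal is correct and follows essentially the same route as the paper's own proof: the same map $\phi\colon R\to M^n$, the same use of $S$-faithfulness to kill $\Ker(\phi)$ up to an element of $S$, and the same pullback of a generating set of $\phi(I)$ to $I$. The only cosmetic difference is that you justify the $S$-Noetherianness of $M^n$ by an explicit induction on extensions, which the paper merely asserts.
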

\begin{proof} Let $M$ be an $S$-Noetherian faithful  $R$-module. Then $M$ is $S$-finite, and so there exist $s\in S$ and  $m_1,\dots,m_n\in M$ such that $sM\subseteq \langle m_1,\dots,m_n\rangle\subseteq M$. Consider the $R$-homomorphism $\phi:R\rightarrow M^n$ given by $\phi(r)=(rm_1,\dots,rm_n)$. We claim that $s\Ker(\phi)=0$. Indeed, let $r\in \Ker(\phi)$. Then $rm_i=0$ for each $i=1,\dots,n$. Hence $srM\subseteq r\langle m_1,\dots,m_n\rangle=0$. And hence $sr\in \Ann_R(M)$. Since $M$ is an $S$-faithful $R$-module, we have $tsr=0$ for some $t\in S$, and so $ts\Ker(\phi)=0$.  Note that $M^n$ is also an $S$-Noetherian $R$-module, and so is its submodule $\Im(\phi)$.  Let $I$ be an ideal of $R$. Then $\phi (I)$ is a submodule of $\Im(\phi)$, and so is $S$-finite. Thus there exists $s'\in S$ and $r_1,\cdots r_n\in I$ such that
$$s'\phi(I)\subseteq \phi(r_1R+\cdots+r_nR) \subseteq \phi(I).$$
We claim that $ss'I\subseteq r_1R+\cdots+r_nR$. Indeed, for any $x\in I$, we have $s'\phi(x)=\phi(r_1t_1+\cdots+r_nt_n)$ for some $t_i\in R\ (i=1,\dots,n).$ Hence $\phi(r_1t_1+\cdots+r_nt_n-s'x)=0$. So
$r_1t_1+\cdots+r_nt_n-s'x\in \Ker(\phi)$, and thus $ts(r_1t_1+\cdots+r_nt_n)-tss'x=0$. It follows that  $tss'I\subseteq ts (r_1R+\cdots+r_nR)\subseteq r_1R+\cdots+r_nR\subseteq I$. Hence $I$ is $S$-finite. So $R$ is an $S$-Noetherian ring.
\end{proof}

\end{document}